\documentclass[12pt]{amsart}

\usepackage{
amsfonts,
latexsym,
amssymb,
mathabx,
}

\usepackage{enumerate}

\makeatletter
\@namedef{subjclassname@2010}{%
  \textup{2010} Mathematics Subject Classification}
\makeatother

\frenchspacing    

 \textwidth=13.5cm  
\textheight=23cm          
\parindent=16pt
\oddsidemargin=-0.5cm
\evensidemargin=-0.5cm
\topmargin=-0.5cm

 \changenotsign

\newcommand{\labbel}{\label}

\newtheorem{theorem}{Theorem}

\newtheorem{thm}[theorem]{Theorem}

\newtheorem{proposition}[theorem]{Proposition} 
 
\newtheorem{corollary}[theorem]{Corollary}

\theoremstyle{definition}
\newtheorem{definition}[theorem]{Definition}

\theoremstyle{remark}



\newcommand{\brfrt}{\hspace{0 pt}}


\newcommand{\m}{\mathfrak}

\begin{document}

\baselineskip=17pt

\title{Compactness of powers of $ \omega$}

\author[P. Lipparini]{Paolo Lipparini} 
\address{Dimartipento di Matematica\\Viale della Ricerca Scientifica\\II Universit\`a di Roma (Tor Vergata)\\I-00133 ROME ITALY}
\urladdr{http://www.mat.uniroma2.it/\textasciitilde lipparin}

\subjclass[2010]{Primary 54B10, 54D20, 03C75; Secondary  03C20, 03E05, 54A20, 54A25}

\keywords{Powers of omega; (finally) compact topological space; infinitary language; ultrafilter convergence;  uniform ultrafilter; $\lambda$-nonstandard element; weakly compact cardinal}

\begin{abstract}
We characterize exactly the compactness properties of
the 
product of $\kappa$ copies of the space $ \omega$ with the discrete topology.
The characterization involves uniform ultrafilters,
infinitary languages, and
the existence of nonstandard elements
in elementary estensions.
We also have results involving 
products of possibly uncountable  regular cardinals.
\end{abstract}

\maketitle

Mycielski \cite{M}, extending previous results by Ehrenfeucht, 
Erd\"os, Hajnal, \L o\'s  and Stone,
showed that $\omega^ \kappa $ is not (finally) $\kappa$-compact, 
for every infinite cardinal $\kappa$ strictly less than the first weakly inaccessible
cardinal. Here $\omega$ denotes a countable topological space with the discrete topology; products (and powers) are endowed with the Tychonoff topology, and a topological space is said to be \emph{finally $\kappa$-compact} if any open cover has a subcover of
cardinality strictly less than $\kappa$. 

On the other direction, Mr\'owka \cite{
Mr1,
Mr2}
showed that if 
$\mathcal L _{ \omega _1, \omega } $
is $ ( \kappa  , \kappa   )$-compact, then
$\omega^ \kappa $ is indeed finally $\kappa$-compact
(in particular, this holds if $\kappa$ is weakly compact).
As usual, $\mathcal L _{ \lambda , \mu  } $ is 
the \emph{infinitary language} which 
allows
conjunctions 
and disjunctions 
of $ < \lambda $ formulas,  and universal or  existential quantification 
over $<\mu$ variables;
\emph{$( \kappa  , \kappa  )$-\brfrt compactness}
means that any $ \kappa $-satisfiable set of $|\kappa|$-many sentences
is satisfiable.

To the best of our knowledge, the gap between 
Mycielski's and  Mr\'owka's results has never been 
exactly filled.
It follows from 
\cite[Theorem 1]{Mr2}  and 
\v{C}udnovski\u{\i} \cite[Theorem 2]{Cho} 
 that 
$\mathcal L _{ \kappa , \omega } $
is $ ( \kappa  , \kappa   )$-compact
if and only if 
every product of $| \kappa |$-many discrete spaces,
each of cardinality $<\kappa$, is finally 
$\kappa$-compact
(the proofs build also on work by 
Hanf, Keisler, Monk, Scott, Tarski, Ulam and others;
earlier versions and variants were known under inaccessibility conditions).  
No matter how satisfying the above  result is, it
adds nothing about powers of $ \omega$, since it deals
with possibly uncountable factors. 

In this note we  show that
   Mr\'owka gives the   exact estimation, namely, that
$\omega^ \kappa $ is  finally $\kappa$-compact
if and only if 
$\mathcal L _{ \omega _1, \omega } $
is $ ( \kappa  , \kappa   )$-compact.
More generally, we find necessary and sufficient conditions
for $\omega^ \kappa $ being finally $ \lambda  $-compact,
or, even, just being $[ \lambda , \lambda ]$-compact. 
Our methods involve intermediate steps of independent interest,
dealing with uniform ultrafilters
and extensions of models by means of ``$\lambda$-nonstandard'' elements.
The equivalences we find in such intermediate steps hold
for arbitrary regular cardinals, not only for $ \omega$; in particular,
compactness properties of products of regular cardinals (with the order topology)
are characterized. 

Throughout, $\lambda$, $\mu$, $\kappa$ and $\nu$ are infinite cardinals, 
$X$ is a topological space, and $D$ is an ultrafilter. Cardinals are 
also considered as topological spaces 
endowed with the order topology.

The space $X$ is
\emph{$[\mu, \lambda ]$-compact} if  every open 
cover of $X$ by at most $\lambda$  sets has a subcover by less than $\mu$  sets.
It is easy to show that final $\kappa$-compactness is equivalent to 
$[ \nu , \nu ]$-compactness, for every 
$\nu \geq \kappa $, or, more generally,
that $[\mu, \lambda ]$-compactness is equivalent to 
$[ \nu , \nu ]$-compactness, for every 
$\nu $ such that $\mu \leq \nu \leq \lambda $.
If $D$ is an ultrafilter over some set $I$, a sequence 
$(x _ i) _{i \in I } $ of elements of $X$ 
is said to \emph{$D$-converge} 
to $x \in X$ if 
 $\{ i \in I \mid x_i \in U\} \in D$,
for every open neighborhood $U$ of $x$.
If $f : I \to J$ is a function, $f(D)$ is the ultrafilter over $J$ 
defined by $Y \in f(D)$ if and only if $ f ^{-1}(Y) \in D $.

\begin{definition} \labbel{def}    
We shall denote by
$\lambda {\not\Rightarrow} (\mu_ \gamma ) _{ \gamma \in \kappa } $ 
the following statement.
 \begin{enumerate}   
\item[(*)]
For every sequence of functions 
$(f_ \gamma  ) _{ \gamma \in \kappa }$,
such that  $f_ \gamma   : \lambda \to \mu_ \gamma  $
for $\gamma \in \kappa$,
there is some uniform ultrafilter $D$ over $\lambda$ such that,
for no $\gamma \in \kappa $,
$f_ \gamma (D)$ is uniform over $\mu_ \gamma $.    
 \end{enumerate}

We shall write 
$\lambda \stackrel{ \kappa }{\not\Rightarrow} \mu$ 
when all the $\mu_ \gamma $'s in (*) are equal to $\mu$.

The negation of  $\lambda \stackrel{ \kappa }{\not\Rightarrow} \mu$  is denoted by $\lambda \stackrel{ \kappa }{\Rightarrow} \mu$. 
\end{definition}

The following observation by Saks \cite[Fact (i) on 
pp. 80--81]{Sa}, 
building also on ideas of Bernstein and Ginsburg,
will play a fundamental role in the present note. 
We shall assume that $\lambda$ is regular, so that we do not need the assumption
that  sequences are faithfully indexed and, moreover, as well-known, 
in this case,
$[ \lambda, \lambda ]$-compactness is equivalent to the statement that 
every subset of cardinality $\lambda$ has a 
complete accumulation point
($C[ \lambda , \lambda ]$ in Saks' notation). 
See also Caicedo \cite[Section 3]{Ca}, in particular, for variations for the case when $\lambda$ is singular.

\begin{proposition} \labbel{saks}
\cite{Sa} If $\lambda$ is regular, then $X$ is $[ \lambda, \lambda ]$-compact
if and only if, for every sequence $(x_ \alpha ) _{ \alpha \in \lambda } $
of elements of $X$, there is an ultrafilter $D$ uniform over $\lambda$ 
such that $(x_ \alpha ) _{ \alpha \in \lambda } $ $D$-converges to some 
$x \in X$.
 \end{proposition}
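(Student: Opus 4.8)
The plan is to rely on the characterization recalled just before the statement: for regular $\lambda$, the space $X$ is $[\lambda,\lambda]$-compact if and only if every subset of $X$ of cardinality $\lambda$ has a complete accumulation point. Thus it suffices to show that this accumulation-point property is equivalent to the stated $D$-convergence property, and I would establish the two implications separately.

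For the easier direction, assume the convergence property and let $A \subseteq X$ with $|A| = \lambda$. Enumerate $A$ \emph{faithfully} as $(x_\alpha)_{\alpha \in \lambda}$, so that the $x_\alpha$ are pairwise distinct. By hypothesis there are a uniform ultrafilter $D$ over $\lambda$ and a point $x$ such that $(x_\alpha)_{\alpha \in \lambda}$ $D$-converges to $x$. Then $x$ is a complete accumulation point of $A$: for every open neighborhood $U$ of $x$ the set $\{\alpha \in \lambda \mid x_\alpha \in U\}$ lies in $D$, hence has cardinality $\lambda$ because $D$ is uniform; since the enumeration is injective, the corresponding $x_\alpha$ are $\lambda$ distinct points of $U \cap A$, so $|U \cap A| = \lambda = |A|$. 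It is exactly uniformity of $D$ that forces the trace to be large here.

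For the converse, assume every subset of size $\lambda$ has a complete accumulation point and let $(x_\alpha)_{\alpha \in \lambda}$ be an arbitrary sequence, now possibly far from injective. Writing $A_y = \{\alpha \in \lambda \mid x_\alpha = y\}$ for the fibers, regularity of $\lambda$ splits the argument into two cases. If some fiber $A_y$ has cardinality $\lambda$, then the filter generated by $A_y$ together with the co-$(<\lambda)$ filter (sets whose complement has cardinality $<\lambda$) is proper, since removing fewer than $\lambda$ elements from a set of size $\lambda$ leaves a set of size $\lambda$; it extends to a uniform ultrafilter $D$ over $\lambda$ with $A_y \in D$, and as $y = x_\alpha$ for every $\alpha \in A_y$, the sequence $D$-converges to $y$. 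Otherwise all fibers have cardinality $<\lambda$, and then, again by regularity, the range $R = \{x_\alpha \mid \alpha \in \lambda\}$ cannot have cardinality $<\lambda$, for otherwise $\lambda$ would be a union of fewer than $\lambda$ sets each of size $<\lambda$; hence $|R| = \lambda$.

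In this second case I apply the complete accumulation point hypothesis to $R$, obtaining $x$ with $|U \cap R| = \lambda$ for every neighborhood $U$ of $x$. For such $U$ set $B_U = \{\alpha \in \lambda \mid x_\alpha \in U\} = \bigcup_{y \in U \cap R} A_y$; since the nonempty fibers are pairwise disjoint, $|B_U| \geq |U \cap R| = \lambda$, so $|B_U| = \lambda$. The family $\{B_U \mid U \text{ a neighborhood of } x\}$ is closed under finite intersections because $B_{U_1} \cap B_{U_2} = B_{U_1 \cap U_2}$, and every member has cardinality $\lambda$; using regularity once more, this family together with the co-$(<\lambda)$ filter generates a proper filter, which extends to a uniform ultrafilter $D$ over $\lambda$ containing every $B_U$. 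By construction $(x_\alpha)_{\alpha \in \lambda}$ $D$-converges to $x$. The main obstacle is precisely this last construction: one must produce a single uniform ultrafilter capturing all neighborhood traces simultaneously, and it is the regularity of $\lambda$ that keeps the relevant filters proper throughout.
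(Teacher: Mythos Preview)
Your argument is correct. Note, however, that the paper does not supply its own proof of this proposition: it is attributed to Saks \cite{Sa} and stated without proof. What the paper does offer is the remark immediately preceding the statement, namely that for regular $\lambda$ one may drop the faithfulness assumption on sequences and that $[\lambda,\lambda]$-compactness is equivalent to the complete-accumulation-point property $C[\lambda,\lambda]$. Your proof is precisely an unpacking of that remark: you pass through $C[\lambda,\lambda]$ and handle the non-injective case by a fiber-size dichotomy, invoking regularity at each point where a set of size $\lambda$ must survive the removal of fewer than $\lambda$ elements. This is the standard route and matches the spirit of what the paper takes for granted.
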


\begin{thm} \labbel{produf}
If  $\lambda$ and
$(\mu_ \gamma ) _{ \gamma \in \kappa } $
 are  regular cardinals, then 
$\prod _{ \gamma \in \kappa } \mu _ \gamma   $ is  $[ \lambda, \lambda ]$-compact
if and only if $\lambda {\not\Rightarrow} (\mu_ \gamma ) _{ \gamma \in \kappa } $. 
 \end{thm}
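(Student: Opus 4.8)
The plan is to view each candidate sequence in the product as a family of coordinate functions, reduce $D$-convergence to a coordinatewise condition, and then read off (non-)uniformity of the push-forwards. Throughout I identify a sequence $(x _ \alpha ) _{ \alpha \in \lambda }$ of points of $\prod _{ \gamma \in \kappa } \mu _ \gamma$ with the family of functions $(f _ \gamma ) _{ \gamma \in \kappa }$, $f _ \gamma : \lambda \to \mu _ \gamma$, given by $f _ \gamma ( \alpha ) = x _ \alpha ( \gamma )$; this is a bijection between such sequences and such families. Since basic open sets of the Tychonoff product constrain only finitely many coordinates and $D$ is closed under finite intersections, $(x _ \alpha ) _{ \alpha \in \lambda }$ $D$-converges to $x$ if and only if, for every $\gamma \in \kappa$, the sequence $(f _ \gamma ( \alpha )) _{ \alpha \in \lambda }$ $D$-converges to $x ( \gamma )$ in $\mu _ \gamma$; equivalently, if and only if $f _ \gamma (D)$ converges to $x( \gamma )$ in $\mu _ \gamma$ for every $\gamma$, since $\{ \alpha \mid f _ \gamma ( \alpha ) \in U \} = f _ \gamma ^{-1}(U) \in D$ exactly when $U \in f _ \gamma (D)$.

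The crux is a one-coordinate lemma: for a regular cardinal $\mu$ with the order topology, an ultrafilter $E$ over $\mu$ converges to some point of $\mu$ if and only if $E$ is not uniform over $\mu$. If $E$ converges to $\beta < \mu$, then either $\beta$ is isolated and $\{ \beta \} \in E$, or $\beta$ is a limit and every interval $( \delta , \beta ]$ lies in $E$; in both cases $E$ contains a set of cardinality $< \mu$, so $E$ is not uniform. Conversely, if $E$ is not uniform it contains a set $A$ with $|A| < \mu$, and regularity of $\mu$ forces $A$ to be bounded, say $A \subseteq [0, \eta ]$ with $\eta < \mu$. The initial segment $[0, \eta ]$ is a compact clopen subspace of $\mu$ (it is the successor ordinal $\eta +1$ in the order topology, and $\eta +1 < \mu$ because $\mu$ is a limit ordinal), and an ultrafilter on a compact space converges; hence $E$, which concentrates on $[0, \eta ]$, converges to some point of $[0, \eta ] \subseteq \mu$. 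I expect this lemma, and in particular the use of regularity to bound small sets so as to land inside a compact initial segment, to be the only real work.

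Combining the two observations, $(x _ \alpha ) _{ \alpha \in \lambda }$ $D$-converges in $\prod _{ \gamma \in \kappa } \mu _ \gamma$ if and only if, for every $\gamma \in \kappa$, $f _ \gamma (D)$ is not uniform over $\mu _ \gamma$. The theorem then follows from Saks' Proposition \ref{saks}, which applies since $\lambda$ is regular. For the forward implication, assume $\prod _{ \gamma \in \kappa } \mu _ \gamma$ is $[ \lambda , \lambda ]$-compact and let $(f _ \gamma ) _{ \gamma \in \kappa }$ be arbitrary; the associated sequence $(x _ \alpha )$ admits, by Proposition \ref{saks}, a uniform ultrafilter $D$ over $\lambda$ to which it $D$-converges, and then no $f _ \gamma (D)$ is uniform over $\mu _ \gamma$, which is exactly $\lambda {\not\Rightarrow} ( \mu _ \gamma ) _{ \gamma \in \kappa }$. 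For the converse, assume $\lambda {\not\Rightarrow} ( \mu _ \gamma ) _{ \gamma \in \kappa }$ and let $(x _ \alpha ) _{ \alpha \in \lambda }$ be any sequence; applying condition (*) of Definition \ref{def} to the associated family $(f _ \gamma ) _{ \gamma \in \kappa }$ yields a uniform ultrafilter $D$ over $\lambda$ with no $f _ \gamma (D)$ uniform, so $(x _ \alpha )$ $D$-converges, and Proposition \ref{saks} gives $[ \lambda , \lambda ]$-compactness.
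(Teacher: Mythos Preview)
Your proof is correct and follows essentially the same route as the paper's: both identify sequences in the product with families of coordinate functions, reduce $D$-convergence in the product to the coordinatewise condition, translate convergence in each $\mu_\gamma$ into non-uniformity of $f_\gamma(D)$ using regularity of $\mu_\gamma$, and conclude via Saks' Proposition~\ref{saks}. The only cosmetic difference is that you justify the one-coordinate equivalence through compactness of the bounded initial segment $[0,\eta]$, whereas the paper states directly that convergence in $\mu_\gamma$ amounts to $\{\alpha \mid f_\gamma(\alpha) < \delta_\gamma\} \in D$ for some $\delta_\gamma < \mu_\gamma$.
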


\begin{proof}
Let $X=\prod _{ \gamma \in \kappa } \mu _ \gamma $, and, for 
$ \gamma \in \kappa$, let 
$\pi_ \gamma : X \to \mu_ \gamma $ be the natural projection.
A sequence of functions as in the first line of (*) 
can be naturally identified with a sequence
$(x_ \alpha ) _{ \alpha \in \lambda } $ of elements 
of $X $, by posing
$\pi _ \gamma (x_ \alpha)=f_ \gamma ( \alpha )  $. 
By Proposition \ref{saks},
$X$
is $[ \lambda, \lambda ]$-compact if and only if, 
for every  sequence $(x_ \alpha ) _{ \alpha \in \lambda } $
of elements of $X$, there is an ultrafilter $D$ uniform over $\lambda$ 
such that $(x_ \alpha ) _{ \alpha \in \lambda } $ $D$-converges in $ X$.
As well known, this happens if and only if,
 for each $\gamma \in \kappa $, 
$ (\pi _ \gamma (x_ \alpha)) _{ \alpha \in \lambda } $ 
$D$-converges in $\mu_ \gamma $, and
this happens if and only if,   for each $\gamma \in \kappa $,
there is $\delta_ \gamma \in \mu _ \gamma $
such that 
$\{ \alpha \in \lambda \mid \pi _ \gamma (x_ \alpha) < \delta _ \gamma \} \in D$.
Under the  mentioned identification, and since 
every $\mu_ \gamma $ is regular, 
this
means exactly that  each  $f_ \gamma (D)$ fails to be  uniform over $\mu_ \gamma $.
 \end{proof}

We now consider models of the form 
$\m A = \langle \lambda, <, \alpha , \dots  \rangle _{ \alpha \in \lambda } $
(here, by abuse of notation, we do not distinguish between a symbol
and its interpretation).
If $\m B \equiv \m A$ (that is, $\m B$ is \emph{elementarily equivalent}
to $\m A$), we say that $b \in B$ is \emph{$\lambda$-nonstandard} 
if $ \alpha < b$ holds in $\m B$, for every $\alpha \in \lambda $.
Similarly, for $\mu < \lambda $, we say that    
$c \in B$ is \emph{$\mu$-nonstandard} 
if $  c < \mu $ and $\beta < c $ hold  in $\m B$, for every $\beta \in \mu$.
Of course, in the case $\lambda= \omega $,
we get the usual notion of a nonstandard element.
The importance of $\lambda$-nonstandard elements 
in Model Theory
has been stressed 
by C. C. Chang and H. J. Keisler; 
see \cite[pp. 116--118]{Cha}.
(About the terminology: a $\mu$-nonstandard element $c$ in the 
above sense is said to \emph{realize $\mu$} in  
 \cite{Cha}, and 
a model with a $\mu $-nonstandard element is said  
to \emph{bound $\mu$} in \cite{bumi}.)

\begin{thm} \labbel{nonst}
If $\mu \leq \lambda $ are regular cardinals and 
$\kappa \geq \lambda $,  then 
$\lambda \stackrel{ \kappa }{\not\Rightarrow} \mu$
if and only if, for 
every expansion $\m A$ of 
$\langle \lambda, <, \alpha \rangle _{ \alpha \in \lambda } $
with at most $\kappa$ new symbols
(equivalently, symbols and sorts), there is 
$\m B \equiv \m A$ 
such that $\m B$ has a $\lambda$-nonstandard element
but no $\mu$-nonstandard element.
 \end{thm}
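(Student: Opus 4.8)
The plan is to build a dictionary between the model-theoretic and ultrafilter-theoretic sides, resting on the computation already implicit in the proof of Theorem~\ref{produf}: for a uniform ultrafilter $D$ over $\lambda$ and a function $g\colon\lambda\to\mu$, the pushforward $g(D)$ is \emph{uniform} over $\mu$ if and only if $\{\alpha\in\lambda\mid g(\alpha)\le\beta\}\notin D$ for every $\beta<\mu$ (this uses that $\mu$ is regular, so that subsets of $\mu$ of size $<\mu$ are exactly the bounded ones). On the model side, if $\m B\equiv\m A$ and $b\in B$ is $\lambda$-nonstandard, then an element of the form $g^{\m B}(b)$ is $\mu$-nonstandard precisely when $g^{\m B}(b)<\mu$ and $\beta<g^{\m B}(b)$ for all $\beta<\mu$. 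These two lists of inequalities are designed to match, and the whole proof consists in making the match precise in each direction.

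For the implication from the model condition to $\lambda\stackrel{\kappa}{\not\Rightarrow}\mu$ I would argue by contraposition. Assume $\lambda\stackrel{\kappa}{\Rightarrow}\mu$, witnessed by functions $(f_\gamma)_{\gamma\in\kappa}$ with $f_\gamma\colon\lambda\to\mu$, and let $\m A=\langle\lambda,<,\alpha,(f_\gamma)_{\gamma\in\kappa}\rangle_{\alpha\in\lambda}$, an expansion by $\kappa$ new function symbols. Given any $\m B\equiv\m A$ with a $\lambda$-nonstandard element $b$, I would let $D_0$ consist of all subsets of $\lambda$ of the form $\varphi^{\m A}$ with $\m B\models\varphi(b)$, where $\varphi(x)$ ranges over formulas in one free variable; elementary equivalence makes this well defined and an ultrafilter on the Boolean algebra of $\m A$-definable subsets of $\lambda$, and since $\m B\models\alpha<b$ for every $\alpha\in\lambda$ it contains every final segment $(\alpha,\lambda)$. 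Extending $D_0$ to an ultrafilter $D$ over $\lambda$ gives a \emph{uniform} one, because $D$ then avoids every bounded (hence, by regularity of $\lambda$, every $<\lambda$-sized) subset. The hypothesis supplies some $\gamma$ with $f_\gamma(D)$ uniform over $\mu$; since the sets $\{\alpha\mid f_\gamma(\alpha)\le\beta\}$ are definable via the symbol $f_\gamma$ and lie outside $D_0$, unwinding the dictionary shows that $f_\gamma^{\m B}(b)$ is a $\mu$-nonstandard element of $\m B$. Thus no $\m B\equiv\m A$ has a $\lambda$-nonstandard but no $\mu$-nonstandard element, which is the negation of the model condition.

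The converse is the crux. Assuming $\lambda\stackrel{\kappa}{\not\Rightarrow}\mu$ and given an expansion $\m A$ with at most $\kappa$ new symbols, the naive choice of $\m B=\m A^\lambda/D$ fails: there $[\mathrm{id}]_D$ is $\lambda$-nonstandard, but a $\mu$-nonstandard element arises from \emph{any} $g\colon\lambda\to\mu$ with $g(D)$ uniform over $\mu$, and there are up to $\mu^\lambda$ such functions, far more than the $\kappa$ functions that $\lambda\stackrel{\kappa}{\not\Rightarrow}\mu$ lets us defeat. The fix is to shrink the model so that it realizes only $\kappa$ functions. I would first Skolemize $\m A$, adding at most $\kappa$ new symbols; since $\kappa\ge\lambda$, the language stays of size $\lambda+\kappa=\kappa$, so there are only $\kappa$ terms. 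Form the ultrapower and take $\m B$ to be the reduct to the original language of the Skolem hull of $\{[\mathrm{id}]_D\}$; as every element of $\lambda$ is named by a constant, every element of this hull is $[s^{\m A}]_D$ for a one-variable term $s$ of the Skolem expansion, and there are only $\kappa$ such terms. For each $s$ put $f_s=s^{\m A}\colon\lambda\to\lambda$ and let $g_s\colon\lambda\to\mu$ be its truncation ($g_s(\alpha)=f_s(\alpha)$ when $f_s(\alpha)<\mu$, and $g_s(\alpha)=0$ otherwise); applying $\lambda\stackrel{\kappa}{\not\Rightarrow}\mu$ to the family $(g_s)_s$ yields a uniform $D$ for which no $g_s(D)$ is uniform over $\mu$. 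A short computation then rules out $\mu$-nonstandard elements: if $[f_s]_D$ were one, then $\{\alpha\mid f_s(\alpha)<\mu\}\in D$ while $\{\alpha\mid f_s(\alpha)\le\beta\}\notin D$ for all $\beta<\mu$, and writing $\{\alpha\mid g_s(\alpha)\le\beta\}$ as the union of $\{\alpha\mid f_s(\alpha)\ge\mu\}$ with $\{\alpha\mid f_s(\alpha)\le\beta\}$ (both outside the ultrafilter $D$) shows $g_s(D)$ would be uniform over $\mu$, a contradiction. Hence $\m B\equiv\m A$ has the $\lambda$-nonstandard element $[\mathrm{id}]_D$ but no $\mu$-nonstandard element.

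The main obstacle is exactly the cardinality mismatch just described, and the key insight is that elementary equivalence, as opposed to literally being an ultrapower, grants the freedom to pass to the Skolem hull and thereby cut the relevant family of functions from $\mu^\lambda$ down to the $\kappa$ term-functions that the combinatorial hypothesis is built to handle; checking that the truncation $g_s$ faithfully records the $\mu$-nonstandardness of $[s^{\m A}]_D$ is the one verification that must be carried out with care. Finally, the degenerate case $\mu=\lambda$ lies outside the stated definition of a $\mu$-nonstandard element and is trivial: both sides fail, since $\lambda\stackrel{\kappa}{\not\Rightarrow}\lambda$ is refuted by the identity function on $\lambda$.
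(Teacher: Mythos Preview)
Your argument is correct and follows the same strategy as the paper: for the forward direction, Skolemize, take the ultrapower, and cut down to the Skolem hull of $[\mathrm{id}]_D$ so that only $\kappa$-many functions need to be controlled (your explicit truncation $g_s$ is just a concrete way of listing the definable functions $\lambda\to\mu$ that the paper enumerates directly); for the converse, your contrapositive is exactly the paper's direct argument of expanding by the $f_\gamma$'s and reading off the required ultrafilter from the type of a $\lambda$-nonstandard $b$.
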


 \begin{proof} 
Suppose $\lambda \stackrel{ \kappa }{\not\Rightarrow} \mu$
and let $\m A$ be an expansion of 
$\langle \lambda, <, \alpha \rangle _{ \alpha \in \lambda } $
with at most $\kappa$ new symbols and sorts.
Without loss of generality, we may assume that 
$\m A$ has Skolem functions, 
since this adds at most $\kappa \geq \lambda $ new symbols.
Enumerate as $(f_ \gamma  ) _{ \gamma \in \kappa }$
all the functions
from  $\lambda $ to $  \mu$
which are definable in $\m A$
(repeat occurrences, if necessary), 
and let $D$ be the ultrafilter given by 
 $\lambda \stackrel{ \kappa }{\not\Rightarrow} \mu$.
Let $\m C$ be the ultrapower $\prod _D \m A$.
Since $D$ is uniform over $\lambda$, 
$b = [Id]_D$, the $D$-class of the identity on $\lambda$,
 is a $\lambda$-nonstandard element in $\m C$.
 Let $\m B$ be the Skolem hull of 
$ \{ b \} $ in $\m C$; thus $\m B \equiv \m C \equiv \m A$,
 and $b$ is a $\lambda$-nonstandard element of $\m B$. 
Had $\m B$ a $\mu$-nonstandard element $c$,
there  would be $\gamma \in \kappa $ such that $c= f_ \gamma (b)$,
by the definition of $\m B$.  Thus 
$c= f_ \gamma ([Id]_D)= [f_ \gamma ]_D$,
but this would imply
that $f_ \gamma (D)$ is uniform over $ \mu$ (since $\mu$
is regular),
contradicting the choice of $D$.  

For the converse, suppose that $(f_ \gamma  ) _{ \gamma \in \kappa }$
is a sequence of functions from $\lambda$ to $\mu$.
Let $\m A$ be the expansion of 
$\langle \lambda, < , \alpha \rangle _{ \alpha \in \lambda } $
 obtained  
by adding the $f_ \gamma $'s as unary functions.
By assumption, there is $ \m B \equiv  \m A$ 
 with a $\lambda$-nonstandard element $b$ 
but without  $\mu$-nonstandard elements.
For every formula $\varphi(y)$ in the similarity type of
$\m A$ and
with exactly one free variable $y$, 
let $Z_ \varphi = \{ \alpha \in \lambda \mid \varphi( \alpha ) \text{ holds in } \m A\}$.
Put 
$E=\{ Z_ \varphi \mid \varphi \text{ is as above, and } 
\varphi(b) \text{ holds in  } \m B \}$. 
$E$ has trivially the finite intersection property, thus it can be extended to some ultrafilter 
$D$ over $\lambda$. Since $\lambda$ is regular and, for every
$\alpha \in \lambda$, $( \alpha, \lambda ) \in E \subseteq D$, we get that $D$ 
is uniform.  
Let $\gamma \in \kappa $. Since  $\m B$ has no 
 $\mu$-nonstandard element,
there is $\beta < \mu$ such that 
$  f_ \gamma (b)< \beta  $  holds in  $  \m B$.  
Letting $\varphi ( y)$
be $  f_ \gamma (y)< \beta  $, we get that
 $Z_ \varphi = 
\{ \alpha \in \lambda \mid  f_ \gamma ( \alpha ) < \beta \}
\in E \subseteq D$, 
proving 
that 
$  f_ \gamma (D)$ is not uniform over $\mu$.
\end{proof} 

If $\Sigma$  and $\Gamma$ are sets of sentences of 
$\mathcal L _{ \omega _1, \omega } $,
we say that $\Gamma$ is \emph{$\mu$-satisfiable
relative to $\Sigma$} if
$\Sigma \cup \Gamma'$
is satisfiable, for every 
$\Gamma' \subseteq \Gamma $ 
of cardinality $<\mu$.
If $\mu \leq \lambda $, we say that $\mathcal L _{ \omega _1, \omega } $ is  
\emph{$ \kappa  $-$( \lambda   , \mu  )$-compact}
if $\Sigma \cup \Gamma$
is satisfiable, whenever 
$|\Sigma| \leq \kappa  $, $|  \Gamma| \leq \lambda $,
and $\Gamma$ is $\mu$-satisfiable
relative to $\Sigma$.
The notion of $ \kappa  $-$( \lambda , \mu )$-compactness
 has been introduced in \cite{bumi} 
for arbitrary logics,  extending  notions by
Chang, Keisler, Makowsky, Shelah and Tarski and others.
Clearly, if $ \kappa \leq \lambda $,
then $ \kappa  $-$( \lambda , \mu   )$-compactness reduces to
the classical notion of $( \lambda , \mu   )$-compactness.
Notice the reversed order of the cardinal parameters with respect to 
the corresponding topological property.

\begin{thm} \labbel{log}  
If $\kappa \geq \lambda $ and $\lambda$ is regular, 
the following conditions are equivalent.
 \begin{enumerate} 
  \item  
$ \omega^ \kappa $ is $[ \lambda , \lambda  ]$-compact.
\item
The language $\mathcal L _{ \omega _1, \omega } $
is $ \kappa  $-$( \lambda , \lambda  )$-compact.
  \item
$\lambda \stackrel{ \kappa }{\not\Rightarrow} \omega $.
 \end{enumerate} 

In particular, if $\lambda$ is regular, then $ \omega^ \lambda $
is finally $\lambda$-compact if and only if 
$\mathcal L _{ \omega _1, \omega } $
is $ ( \lambda , \lambda  )$-compact.
\end{thm}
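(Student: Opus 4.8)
The plan is to prove Theorem~\ref{log} by establishing the three equivalences $(1)\Leftrightarrow(3)$, $(3)\Leftrightarrow(2)$, and then deriving the final ``in particular'' statement as the special case $\kappa=\lambda$.

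Let me think about what each piece requires.

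**Equivalence $(1) \Leftrightarrow (3)$:** This should follow almost immediately from Theorem~\ref{produf}. Setting all $\mu_\gamma = \omega$ in that theorem, we get: $\omega^\kappa = \prod_{\gamma \in \kappa} \omega$ is $[\lambda,\lambda]$-compact iff $\lambda \not\Rightarrow (\omega)_{\gamma \in \kappa}$, which is exactly $\lambda \stackrel{\kappa}{\not\Rightarrow} \omega$ by the notational convention in Definition~\ref{def}. So this is basically just unpacking definitions — no work needed beyond citing Theorem~\ref{produf}.

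**Equivalence $(2) \Leftrightarrow (3)$:** This is the real content. I need to connect the language compactness $\kappa$-$(\lambda,\lambda)$-compactness of $\mathcal{L}_{\omega_1,\omega}$ with the combinatorial statement $\lambda \stackrel{\kappa}{\not\Rightarrow}\omega$. The intermediate tool is Theorem~\ref{nonst}, applied with $\mu = \omega$. Theorem~\ref{nonst} (with $\mu=\omega$) says: $\lambda \stackrel{\kappa}{\not\Rightarrow}\omega$ iff every expansion $\mathfrak{A}$ of $\langle \lambda, <, \alpha\rangle_{\alpha\in\lambda}$ with $\le \kappa$ new symbols has an elementary equivalent $\mathfrak{B}$ with a $\lambda$-nonstandard element but no $\omega$-nonstandard (i.e., no genuine nonstandard natural number) element. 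The bridge I need to build is: "existence of such $\mathfrak{B}$ for all expansions" $\Leftrightarrow$ "$\mathcal{L}_{\omega_1,\omega}$ is $\kappa$-$(\lambda,\lambda)$-compact."

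**The main construction.** To get from the model-theoretic statement to the logic statement, I would proceed as follows.

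For one direction, suppose $\mathcal{L}_{\omega_1,\omega}$ is $\kappa$-$(\lambda,\lambda)$-compact; I want to produce, for each expansion $\mathfrak{A}$, the desired $\mathfrak{B}$. The idea is to code the requirements as a satisfaction problem. Take $\Sigma$ to be the elementary diagram (in $\mathcal{L}_{\omega_1,\omega}$, or really the first-order theory will do, though infinitary sentences let me express the crucial "no $\omega$-nonstandard element" condition) describing $\mathfrak{A}$ up to elementary equivalence — this has size $\le\kappa$. Introduce a new constant $c$; let $\Gamma$ consist of the $\lambda$-many sentences $\alpha < c$ for $\alpha\in\lambda$, which force $c$ to be $\lambda$-nonstandard. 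The crucial point is that "no $\omega$-nonstandard element" is expressible by a single $\mathcal{L}_{\omega_1,\omega}$ sentence: the countable disjunction $\forall x \bigvee_{n\in\omega}(x = n \lor \neg(x \text{ is a natural number}))$ — more precisely, a sentence asserting every element below the "first limit" equals some standard $n$. I put this sentence into $\Sigma$. Then $\Gamma$ is $\lambda$-satisfiable relative to $\Sigma$ because any $<\lambda$-subset of $\Gamma$ only demands $c$ exceed fewer than $\lambda$ ordinals, satisfiable in $\mathfrak{A}$ itself (using regularity of $\lambda$) since $\lambda$ is regular and $\mathfrak{A}$ already has the no-$\omega$-nonstandard property. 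By $\kappa$-$(\lambda,\lambda)$-compactness, $\Sigma\cup\Gamma$ has a model $\mathfrak{B}$, which is then elementarily equivalent to $\mathfrak{A}$, has the $\lambda$-nonstandard element $c$, and no $\omega$-nonstandard element. For the converse direction, I reverse the coding: given $\Sigma$ and $\Gamma$ witnessing a failure of compactness, I build an expansion $\mathfrak{A}$ by adding symbols that internalize the sentences of $\Gamma$ (indexing them by ordinals below $\lambda$) so that a $\lambda$-nonstandard element corresponds to a point past which ``all of $\Gamma$ is satisfied,'' while absence of an $\omega$-nonstandard element blocks exactly the non-satisfiability.

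\medskip

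The hard part, and the step I would scrutinize most carefully, is ensuring that the infinitary sentence expressing ``no $\omega$-nonstandard element'' is genuinely available in $\mathcal{L}_{\omega_1,\omega}$ and interacts correctly with the $\mu$-satisfiability-relative-to-$\Sigma$ bookkeeping — in particular, confirming that the $\le\kappa$ bound on $|\Sigma|$ is respected once I throw in the elementary diagram plus the standardness sentence, and that the regularity of $\lambda$ is what makes every $<\lambda$-fragment of $\Gamma$ satisfiable back inside $\mathfrak{A}$. Getting the reversed-order convention of the parameters $(\kappa$ versus $\lambda)$ consistent between the topological and logical sides, as the text explicitly warns, is the subtle bookkeeping I must not botch. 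Once $(2)\Leftrightarrow(3)$ is secured, the final ``in particular'' claim is simply the instance $\kappa=\lambda$: then $\kappa$-$(\lambda,\lambda)$-compactness collapses to ordinary $(\lambda,\lambda)$-compactness (as noted right before the theorem, since $\kappa\le\lambda$), and final $\lambda$-compactness of $\omega^\lambda$ is, by the remark following the definition of $[\mu,\lambda]$-compactness, equivalent to $[\lambda,\lambda]$-compactness of $\omega^\lambda$, i.e.\ condition $(1)$ with $\kappa=\lambda$.
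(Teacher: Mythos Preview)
Your overall architecture matches the paper's: $(1)\Leftrightarrow(3)$ via Theorem~\ref{produf}, and $(2)\Leftrightarrow(3)$ via the model-theoretic reformulation in Theorem~\ref{nonst}. Your argument for $(2)\Rightarrow(3)$ is correct and is indeed the easy direction (the paper treats it as such and only sketches the other one).

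The gap is in your $(3)\Rightarrow(2)$ sketch. Theorem~\ref{nonst} only delivers a model $\m B$ that is \emph{first-order} elementarily equivalent to $\m A$. So if you ``internalize'' $\Sigma$ and $\Gamma$ into an expansion $\m A$, the elementary equivalence $\m B\equiv\m A$ transfers only first-order facts about the coding relation, not $\mathcal L_{\omega_1,\omega}$-facts. Your phrase ``absence of an $\omega$-nonstandard element blocks exactly the non-satisfiability'' gestures at something, but the actual mechanism is missing: the point is that in any model with no $\omega$-nonstandard element, each countable conjunction $\bigwedge_{n\in\omega}\varphi_n(\bar x)$ is first-order definable as $\forall y<\omega\,R(y,\bar x)$ for a fresh relation $R$ with $R(n,\bar x)\Leftrightarrow\varphi_n(\bar x)$. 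Using this, one may assume $\Sigma$ and $\Gamma$ are first-order (at the cost of $\le\kappa$ new symbols), then build $\m A$ containing $\langle\lambda,<,\alpha\rangle_{\alpha\in\lambda}$ together with a relation $S$ such that, for each $\beta<\lambda$, the slice $\{z: S(\beta,z)\}$ is a model of $\Sigma\cup\{\gamma_\alpha:\alpha<\beta\}$; then for $\m B\equiv\m A$ with $\lambda$-nonstandard $b$ and no $\omega$-nonstandard element, $\{z\in B: S(b,z)\}$ is a genuine model of $\Sigma\cup\Gamma$. Without the reduction to first order, you cannot conclude that the slice at $b$ satisfies the original $\mathcal L_{\omega_1,\omega}$ sentences.

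A smaller point: for the ``in particular'' clause, the remark after the definition of $[\mu,\lambda]$-compactness only tells you that final $\lambda$-compactness is $[\nu,\nu]$-compactness for \emph{all} $\nu\ge\lambda$, not just $\nu=\lambda$. You still need the observation (which the paper makes explicitly) that $\omega^\lambda$ has a base of cardinality $\lambda$ and is therefore automatically finally $\lambda^+$-compact; only then does final $\lambda$-compactness collapse to $[\lambda,\lambda]$-compactness.
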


\begin{proof}
The equivalence of (1) and (3) is the particular case of 
Theorem \ref{produf} when all $\mu_ \gamma $'s equal $ \omega$.  
In view of Theorem \ref{nonst}, it is enough to prove
that (2) is equivalent to the necessary and sufficient condition given there for 
$\lambda \stackrel{ \kappa }{\not\Rightarrow} \omega $.
This is Theorem  3.12 in \cite{bumi} and, anyway, it is a standard argument.
We sketch a proof for the non trivial direction. So, suppose that the condition
in Theorem \ref{nonst} holds. For models without  $ \omega$-nonstandard elements,
a formula of $\mathcal L _{ \omega _1, \omega } $ of 
the form $\bigwedge _{n \in \omega } \varphi_n (\bar{x})  $ 
is equivalent to $\forall y < \omega R(y, \bar{x})$, for a newly introduced relation 
$R$ such that $R(n, \bar{x}) \Leftrightarrow \varphi_n (\bar{x}) $, for every 
$ n \in \omega$. Thus, working within such   models, 
and appropriately extending the vocabulary,
 we may assume that 
 $\Sigma  $ 
and $\Gamma $ are sets of first order sentences. 
If $|\Sigma| \leq \kappa  $, 
and $\Gamma = \{ \gamma _ \alpha \mid \alpha \in \lambda  \} $ is $\lambda$-satisfiable relative to $\Sigma$, 
construct a model $\m A$ which contains
$\langle \lambda, < , \alpha \rangle _{ \alpha \in \lambda } $, and with a relation
$S$ such that, for every $\beta <\lambda $,  $\{ z \in A \mid S( \beta , z) \}$ 
models $\Sigma \cup \{ \gamma _ \alpha \mid \alpha < \beta  \} $.
This is possible, since $\Gamma$ is $\lambda$-satisfiable
relative to $\Sigma$.
If  $\m B \equiv \m A$ is given by
$\lambda \stackrel{ \kappa }{\not\Rightarrow} \omega $,
 and $b \in B$ is $\lambda$-nonstandard, then  
$\{ z \in B \mid S( b , z) \}$ models $\Sigma \cup \Gamma$.

The last statement follows from the trivial fact that
$ \omega^ \lambda $
is finally $\lambda^+$-compact, 
since it has a base of cardinality $\lambda$;
hence $ \omega^ \lambda $ is
 finally  $\lambda$-compact if and only if it is $ [\lambda , \lambda  ]$-compact.
\end{proof}

The assumption that $\lambda$ is regular in Theorem \ref{log}
is only for simplicity: we can devise a modified principle, call it  
$(\lambda, \lambda ) \stackrel{ \kappa }{\not\Rightarrow} \omega $,
which involves $(\lambda, \lambda )$-regular ultrafilters \cite{arch}, and
functions $f_ \gamma : [ \lambda ] ^{< \lambda } \to \omega  $. All the arguments carry over
to get a result corresponding to Theorem \ref{log}. In particular,
the equivalence of (1) and (2) holds with no regularity assumption  on $\lambda$.  
To keep this note within the limits of a reasonable length, we shall present details elsewhere.

\begin{corollary} \labbel{interv}
If $\kappa \geq \lambda $, then
$ \omega^ \kappa $ is finally $ \lambda  $-compact
if and only if  $\mathcal L _{ \omega _1, \omega } $
is $( \kappa  , \lambda  )$-compact.
 \end{corollary}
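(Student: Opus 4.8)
The plan is to reduce the final $\lambda$-compactness of $\omega^\kappa$ to a whole family of $[\nu,\nu]$-compactness statements, to translate each of these into a $\kappa$-$(\nu,\nu)$-compactness statement about $\mathcal L_{\omega_1,\omega}$ by means of Theorem \ref{log}, and finally to collapse this family of logical statements into the single classical property of $(\kappa,\lambda)$-compactness. First I would invoke the fact recalled in the introduction that final $\lambda$-compactness is equivalent to $[\nu,\nu]$-compactness for every $\nu\ge\lambda$. Since $\omega^\kappa$ has a base of cardinality $\kappa$, it is finally $\kappa^+$-compact, so $[\nu,\nu]$-compactness is automatic for $\nu>\kappa$; hence $\omega^\kappa$ is finally $\lambda$-compact if and only if it is $[\nu,\nu]$-compact for every $\nu$ with $\lambda\le\nu\le\kappa$. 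For each such $\nu$ we have $\kappa\ge\nu$, so the equivalence of (1) and (2) in Theorem \ref{log} gives that $[\nu,\nu]$-compactness of $\omega^\kappa$ is equivalent to $\kappa$-$(\nu,\nu)$-compactness of $\mathcal L_{\omega_1,\omega}$; for the singular values of $\nu$ I would appeal to the Remark following Theorem \ref{log}, which states that this very equivalence survives the removal of the regularity hypothesis. Thus $\omega^\kappa$ is finally $\lambda$-compact if and only if $\mathcal L_{\omega_1,\omega}$ is $\kappa$-$(\nu,\nu)$-compact for every $\nu$ with $\lambda\le\nu\le\kappa$.

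It then remains to prove the purely model-theoretic equivalence that $\mathcal L_{\omega_1,\omega}$ is $(\kappa,\lambda)$-compact if and only if it is $\kappa$-$(\nu,\nu)$-compact for every $\nu$ with $\lambda\le\nu\le\kappa$. The forward direction is routine: if $|\Sigma|\le\kappa$, $|\Gamma|\le\nu$, and $\Gamma$ is $\nu$-satisfiable relative to $\Sigma$, then $\Sigma\cup\Gamma$ consists of at most $\kappa+\nu=\kappa$ sentences (using $\nu\le\kappa$) and is $\lambda$-satisfiable, because any subset of it of cardinality $<\lambda$ lies in $\Sigma$ together with fewer than $\nu$ sentences of $\Gamma$ (using $\lambda\le\nu$), which is satisfiable by the relative $\nu$-satisfiability of $\Gamma$; so $(\kappa,\lambda)$-compactness makes $\Sigma\cup\Gamma$ satisfiable. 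For the converse, taking $\Sigma=\emptyset$ first shows that $\kappa$-$(\nu,\nu)$-compactness implies ordinary $(\nu,\nu)$-compactness. Given then a $\lambda$-satisfiable $\Gamma$ with $|\Gamma|=\theta\le\kappa$ (the case $\theta<\lambda$ being immediate from $\lambda$-satisfiability), I would show by induction on cardinals $\nu\in[\lambda,\theta^+]$ that $\Gamma$ is $\nu$-satisfiable: the base $\nu=\lambda$ is the hypothesis; at a successor $\nu=\rho^+$ each subset of $\Gamma$ of size $\le\rho$ is $\rho$-satisfiable and has at most $\rho$ sentences, hence is satisfiable by $(\rho,\rho)$-compactness, available since $\lambda\le\rho\le\theta\le\kappa$; and limit stages pass over freely. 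Taking $\nu=\theta^+$ shows that $\Gamma$, having size $\theta$, is satisfiable.

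I expect the crux to be the converse of this last equivalence, namely building full satisfiability of a set of up to $\kappa$ sentences out of the individual bounded-degree compactness instances: the transfinite induction that raises the degree of satisfiability one cardinal at a time is the real content, and it is exactly there that all of the intermediate $(\nu,\nu)$-compactness instances---including the singular ones---get consumed. The only other delicate point is the appearance of singular $\nu$ in the topological-to-logical step, which is handled by the Remark after Theorem \ref{log} rather than by its stated, regularity-bound form.
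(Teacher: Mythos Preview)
Your proposal is correct and follows essentially the same route as the paper: reduce final $\lambda$-compactness to $[\nu,\nu]$-compactness for $\lambda\le\nu\le\kappa$ via the $\kappa^+$-base observation, translate each instance through Theorem~\ref{log} (invoking the Remark for singular $\nu$), and then collapse the resulting family of logical compactness statements into $(\kappa,\lambda)$-compactness. The only difference is cosmetic: the paper passes silently from $\kappa$-$(\nu,\nu)$-compactness to ordinary $(\nu,\nu)$-compactness and then cites \cite[Proposition 2.2(iv)]{bumi} for the final equivalence, whereas you keep the $\kappa$-prefix explicit and write out the transfinite induction yourself.
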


\begin{proof}
Since $ \omega^ \kappa $ is finally $ \kappa ^+ $-compact,
we have that
it is finally $ \lambda  $-compact
if and only if it is 
 $[ \lambda' , \lambda'  ]$-compact,
for every $\lambda'$ such that 
$\lambda \leq \lambda ' \leq \kappa $.
By Theorem \ref{log} and the preceding remark,
this holds if and only if   
$\mathcal L _{ \omega _1, \omega } $
is $( \lambda'  , \lambda'  )$-compact,
for every $\lambda'$ such that 
$\lambda \leq \lambda ' \leq \kappa $.
It is a standard argument to show that 
this is equivalent to $( \kappa  , \lambda  )$-compactness of $\mathcal L _{ \omega _1, \omega } $. See,
e.~g., \cite[Proposition 2.2(iv)]{bumi}.
\end{proof}

A remark is in order here, about the principle $\lambda \stackrel{ \kappa }{\not\Rightarrow} \mu $. Since there are $\mu^\lambda$ functions 
from  $\lambda$ to $\mu$, we get that if $\kappa, \kappa ' \geq \mu^\lambda$,
then  $\lambda \stackrel{ \kappa }{\not\Rightarrow} \mu $ is equivalent
to $\lambda \stackrel{ \kappa' }{\not\Rightarrow} \mu $,
and it is also equivalent to the statement ``there is some ultrafilter $D$ uniform over $\lambda$ such that, for no function $f: \lambda \to \mu$, $f(D)$ is uniform over
$\mu$''. This property has been widely studied by set theorists,
generally under the terminology ``$D$ over $\lambda$ is $\mu$-indecomposable''. 
In this sense, the particular case $\mu= \omega $ considered in Theorem \ref{log}
 incorporates some results involving measurable and related cardinals.  For example,
if $\lambda$ is regular, 
all powers of $ \omega$ are 
$ [\lambda , \lambda  ]$-compact if and only if 
$ \omega ^{2 ^ \lambda } $ is $ [ \lambda , \lambda  ]$-compact,
if and only if 
$\lambda$   carries some $ \omega_1$-complete uniform ultrafilter
(due to the special property of the cardinal $ \omega_1$,
to the effect that $ \omega_1$-completeness is equivalent to
$ \omega$-indecomposability).
In particular,
we get a classical result by \L o\'s \cite{L}, asserting that
$ \omega ^{2^ \lambda } $ is not finally $ \lambda $-compact, provided 
that $ \lambda $ is regular and there is no measurable cardinal $\leq \lambda $.
Moreover, we get that
  all powers of $ \omega$ are finally $\lambda$-compact if and only if, for
every $\lambda' \geq \lambda$,
there is a $( \lambda ', \lambda ') $-regular   
$ \omega_1$-complete ultrafilter
(in particular, this holds
if $\lambda$ is strongly compact).

Many results about $\mu$-indecomposable ultrafilters over $\lambda$ 
generalize to properties of $\lambda \stackrel{ \kappa }{\not\Rightarrow} \mu $,
for appropriate $\kappa< \mu^ \lambda $, but usually with more involved proofs. We initiated this project in \cite{bumi,arch}. 
Applications to powers of $ \omega$ are presented in the next two corollaries.
Notice that in \cite{bumi} the definition of $\lambda \stackrel{ \kappa }{\not\Rightarrow} \mu$ is given directly by means of the condition in Theorem \ref{nonst}. 
The two definitions do not necessarily coincide for $\kappa < \lambda $;
however, here $\kappa \geq \lambda $ is always assumed.

\begin{corollary} \labbel{umi}
Let $\kappa$ be given, and suppose 
that there is some $\lambda \leq \kappa $ such that 
$ \omega^ \kappa $ is $[ \lambda , \lambda ]$-compact.
If $\lambda$ is the first such cardinal, then 
$\mathcal L _{ \lambda , \omega } $ is 
$ \kappa  $-$( \lambda , \lambda  )$-compact; in particular, 
$\lambda$ is weakly inaccessible (actually, rather high 
in the weak Mahlo hierarchy).   
If, in addition, $2 ^{<\lambda} \leq \kappa $, then 
$\lambda$ is weakly compact; and if  
$2 ^{\lambda} \leq \kappa $, then 
$\lambda$ is measurable.
 \end{corollary}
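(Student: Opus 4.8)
The plan is to funnel everything through the principle $\lambda\stackrel{\kappa}{\not\Rightarrow}\omega$ and then to use the minimality of $\lambda$ in an essential way. By Theorem \ref{log}, the hypothesis that $\omega^\kappa$ is $[\lambda,\lambda]$-compact already gives both that $\mathcal L_{\omega_1,\omega}$ is $\kappa$-$(\lambda,\lambda)$-compact and that $\lambda\stackrel{\kappa}{\not\Rightarrow}\omega$; the entire task is to boost $\omega_1$ to $\lambda$ in the first of these. I would begin by showing that $\lambda$ is regular: were it singular, a singular-compactness analysis as in Caicedo \cite{Ca} would force $\omega^\kappa$ to be $[\nu,\nu]$-compact for some regular $\nu<\lambda$, contradicting the minimality of $\lambda$. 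Once $\lambda$ is regular, minimality says exactly that $\omega^\kappa$ fails to be $[\mu,\mu]$-compact for every regular $\mu<\lambda$, which by Theorem \ref{log} applied with compactness index $\mu$ (legitimate since $\mu<\lambda\le\kappa$ and $\mu$ is regular) translates into $\mu\stackrel{\kappa}{\Rightarrow}\omega$ for every regular $\mu<\lambda$.

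The engine of the proof is a short composition lemma. Let $(\mu_\gamma)_{\gamma\in\kappa}$ enumerate, with repetitions, the regular cardinals below $\lambda$ (there are at most $\lambda\le\kappa$ of them, so an enumeration of length $\kappa$ exists). I claim $\lambda\not\Rightarrow(\mu_\gamma)_{\gamma\in\kappa}$. Indeed, given functions $f_\gamma:\lambda\to\mu_\gamma$, fix for each regular $\mu<\lambda$ a family $(g^\mu_\delta:\mu\to\omega)_{\delta\in\kappa}$ witnessing $\mu\stackrel{\kappa}{\Rightarrow}\omega$, and form the $\kappa$-many composites $h_{\gamma,\delta}=g^{\mu_\gamma}_\delta\circ f_\gamma:\lambda\to\omega$. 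Applying $\lambda\stackrel{\kappa}{\not\Rightarrow}\omega$ to this family yields a uniform $D$ over $\lambda$ with every $h_{\gamma,\delta}(D)$ non-uniform over $\omega$. If some $f_\gamma(D)$ were uniform over $\mu_\gamma$, then the choice of $(g^{\mu_\gamma}_\delta)_\delta$ would make $h_{\gamma,\delta}(D)=g^{\mu_\gamma}_\delta(f_\gamma(D))$ uniform over $\omega$ for some $\delta$, a contradiction; so $D$ witnesses $\lambda\not\Rightarrow(\mu_\gamma)_{\gamma\in\kappa}$. Feeding this into the evident generalization of Theorem \ref{nonst} to sequences $(\mu_\gamma)_{\gamma\in\kappa}$, whose proof goes through verbatim (enumerate the definable functions into the various $\mu_\gamma$ and take an ultrapower by $D$ followed by a Skolem hull), I obtain: for every expansion $\m A$ of $\langle\lambda,<,\alpha\rangle_{\alpha\in\lambda}$ by at most $\kappa$ symbols there is $\m B\equiv\m A$ with a $\lambda$-nonstandard element but with no $\mu$-nonstandard element for any $\mu<\lambda$ (singular $\mu$ being reduced to $\cf(\mu)$ by throwing a cofinal map into the vocabulary, so that a $\mu$-nonstandard element would manufacture a $\cf(\mu)$-nonstandard one).

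With such models available, the upgrade to $\mathcal L_{\lambda,\omega}$ copies the non-trivial direction of Theorem \ref{log}: working inside models with no $\mu$-nonstandard elements, each subformula $\bigwedge_{i<\mu}\varphi_i(\bar x)$ with $\mu<\lambda$ is equivalent to $\forall y<\mu\,R(y,\bar x)$ for a fresh $R$, so by recursion on the well-founded structure of the $\mathcal L_{\lambda,\omega}$-sentences, and after expanding the vocabulary by at most $\kappa$ symbols, both $\Sigma$ and $\Gamma$ may be taken first order; the model $\m A$ carrying the relation $S$, together with a $\lambda$-nonstandard $b$ in $\m B$, then produces a model of $\Sigma\cup\Gamma$. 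This is the asserted $\kappa$-$(\lambda,\lambda)$-compactness of $\mathcal L_{\lambda,\omega}$. The remaining conclusions are then standard (see \cite{bumi}): $\kappa$-$(\lambda,\lambda)$-compactness of $\mathcal L_{\lambda,\omega}$ forces $\lambda$ weakly inaccessible, and an inductive reflection of the compactness property to stationarily many smaller cardinals places $\lambda$ high in the weak Mahlo hierarchy. If moreover $2^{<\lambda}\le\kappa$, the $<\lambda$-quantifier blocks of $\mathcal L_{\lambda,\lambda}$ can be coded by $\le 2^{<\lambda}\le\kappa$ auxiliary symbols absorbed into $\Sigma$, upgrading compactness to $(\lambda,\lambda)$-compactness of $\mathcal L_{\lambda,\lambda}$, i.e.\ to weak compactness of $\lambda$; and if $2^\lambda\le\kappa$, then $\lambda\stackrel{\kappa}{\not\Rightarrow}\omega$ provides, as in the remark preceding the corollary, a single $\omega_1$-complete uniform ultrafilter over $\lambda$, while minimality makes $\lambda$ the least cardinal carrying a countably complete uniform ultrafilter, which is therefore automatically $\lambda$-complete, so $\lambda$ is measurable.

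The main obstacle, to my mind, is not the composition lemma (which is genuinely short) but the simultaneous elimination of $\mu$-nonstandard elements for all $\mu<\lambda$ at once: this is precisely what separates $\mathcal L_{\lambda,\omega}$ from $\mathcal L_{\omega_1,\omega}$, and it is here that minimality is indispensable. Two further points demand care, namely the regularity of $\lambda$ in the singular case, which really needs the singular-compactness machinery rather than the clean ultrafilter arguments available for regular $\lambda$, and the bookkeeping that keeps the number of auxiliary symbols in the $\mathcal L_{\lambda,\omega}$-reduction (and in the $2^{<\lambda}$-coding) within the budget $\kappa$.
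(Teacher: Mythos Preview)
Your argument is essentially correct, and it is considerably more explicit than the paper's own proof, which consists of a single sentence invoking Theorem~\ref{log} (1)$\Leftrightarrow$(2) together with Theorem~3.9 of \cite{bumi} applied to $N=\mathcal L_{\omega_1,\omega}$. What you have written is, in effect, an unpacking of that cited theorem in the ultrafilter language of the present paper rather than in the abstract-logic language of \cite{bumi}: you pass through condition~(3) of Theorem~\ref{log} instead of condition~(2), and your composition lemma (pushing the witnesses $g^\mu_\delta$ for $\mu\stackrel{\kappa}{\Rightarrow}\omega$ through the given $f_\gamma$'s and then applying $\lambda\stackrel{\kappa}{\not\Rightarrow}\omega$ to the $\kappa\cdot\kappa=\kappa$ composites) is exactly the combinatorial heart of the matter. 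The simultaneous elimination of $\mu$-nonstandard elements for all regular $\mu<\lambda$, via the evident extension of Theorem~\ref{nonst}, then legitimately upgrades $\mathcal L_{\omega_1,\omega}$ to $\mathcal L_{\lambda,\omega}$, just as you say.

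Two places deserve a little more care, though neither is a genuine gap. First, the regularity of the minimal $\lambda$: your appeal to Caicedo's singular-compactness analysis is the right move, but it is doing real work and is not covered by anything proved in the present paper; you should be explicit that this is where the argument leaves the paper's internal toolkit. Second, in the bookkeeping for the $\mathcal L_{\lambda,\omega}$ reduction and for the $2^{<\lambda}$ coding toward weak compactness, you need the total number of subformulas (over all sentences in $\Sigma\cup\Gamma$) to stay within $\kappa$; this is standard once one fixes conventions on the size of $\mathcal L_{\lambda,\omega}$-sentences, but it is worth saying. Your measurability argument under $2^\lambda\le\kappa$, via the remark preceding the corollary and the classical fact that the least cardinal carrying a uniform $\omega_1$-complete ultrafilter is measurable, is clean and correct.
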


 \begin{proof} 
From Theorem \ref{log} (1) $\Leftrightarrow $  (2) and 
Theorem 3.9
in \cite{bumi}, applied in the particular case of 
$N=\mathcal L _{ \omega _1, \omega } $.
\end{proof}

As a consequence of Theorem \ref{log} and of Corollary \ref{umi},
if there is no measurable cardinal and
the Generalized Continuum Hypothesis holds,
then $ \omega^ \kappa  $ 
is finally $ \kappa  $-compact if and only if $ \kappa $ is weakly compact;
moreover, 
 $ \omega ^ \kappa $ is never $[ \lambda , \lambda  ]$-compact, for $ \lambda < \kappa $
(only special consequences of GCH are needed  in the above statements: we need 
only that every weakly Mahlo cardinal is inaccessible, and that GCH holds 
at weakly Mahlo cardinals).
The assumptions are necessary:
if $\mu$ is $\mu^+$-compact, then there 
is an $ \omega_1$-complete ultrafilter uniform over $\mu^+$,
hence, by a previous remark, all powers of $ \omega$ are $[ \mu^+ , \mu^+  ]$-compact, hence $ \omega  ^{ \mu ^+} $ is finally $\mu ^+$-compact; 
however, $\mu^+$ is not weakly compact.
Moreover, if $\lambda$ is measurable,
then all powers of $ \omega$ are 
$[ \lambda , \lambda  ]$-compact.
With less stringent large cardinal assumptions, Boos \cite{B}, extending results by 
 Kunen, Solovay and others,
constructed models in which   
GCH fails and $\mathcal L _{ \lambda  , \omega } $ 
(hence also  $\mathcal L _{ \omega _1, \omega } $)  
are  $( \lambda , \lambda  )$-compact but 
$\lambda$  is not weakly compact, not even inaccessible.

For $\mu$, $\lambda$ regular cardinals, the principle
$E_ \lambda ^ \mu  $ asserts that $ \lambda $ has
a nonreflecting stationary set consisting of ordinals
of cofinality $\mu$. The next corollary 
applies not only to powers of $ \omega$, but also to powers 
of regular cardinals (always endowed with the order topology).

\begin{corollary} \labbel{ekl} 
If 
$\mu < \lambda $ are regular, and
$E_ \lambda ^ \mu  $, then
$\mu^ \lambda $ is not $[ \lambda , \lambda  ]$-compact.

If $\Box_ \lambda $, then 
$\mu^{ \lambda^+} $ is not $[ \lambda^+ , \lambda^+  ]$-compact,
for every regular $\mu \leq \lambda $.
\end{corollary}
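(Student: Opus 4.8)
The plan is to handle the two statements separately, reducing the first to the principle $\lambda \stackrel{\lambda}{\Rightarrow}\mu$ and then reducing the second to the first. For the first statement, Theorem~\ref{produf} (applied with every $\mu_\gamma$ equal to $\mu$ and $\kappa=\lambda$) says that $\mu^\lambda$ is $[\lambda,\lambda]$-compact if and only if $\lambda \stackrel{\lambda}{\not\Rightarrow}\mu$, so it suffices to establish $\lambda \stackrel{\lambda}{\Rightarrow}\mu$. I would do this through the model-theoretic reformulation of Theorem~\ref{nonst}: since $\kappa=\lambda$, the failure of $\lambda \stackrel{\lambda}{\not\Rightarrow}\mu$ is equivalent to the existence of a \emph{single} expansion $\m A$ of $\langle \lambda,<,\alpha\rangle_{\alpha\in\lambda}$, using at most $\lambda$ symbols, such that every $\m B\equiv\m A$ carrying a $\lambda$-nonstandard element also carries a $\mu$-nonstandard one. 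Fix a nonreflecting stationary $S\subseteq\{\delta<\lambda:\cf\delta=\mu\}$ witnessing $E_\lambda^\mu$, and let $\m A$ expand the base structure by a predicate for $S$; a $C$-sequence $\langle C_\delta\rangle_{\delta<\lambda}$ with each $C_\delta$ club in $\delta$ and $C_\delta\cap S=\varnothing$ (available exactly because $S$ does not reflect); for $\delta\in S$ a cofinal increasing enumeration $\langle\eta^\delta_i\rangle_{i<\mu}$ of $\delta$; and Skolem functions. This uses $\lambda$ constants together with finitely many further symbols, and Skolem closure keeps the total at $\lambda$, so the bound is respected.

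Next I would show that any $\m B\equiv\m A$ with a $\lambda$-nonstandard $b$ has a $\mu$-nonstandard element, arguing by contradiction. If it did not, then, just as in the converse direction of the proof of Theorem~\ref{nonst}, $D=\{Z_\varphi:\m B\models\varphi(b)\}$ would be a uniform ultrafilter on $\lambda$ relative to which every $\m A$-definable $f\colon\lambda\to\mu$ is bounded. The crux, and the step I expect to be the main obstacle, is to contradict this from the nonreflection of $S$, i.e. to show that a nonreflecting stationary subset of $\{\delta<\lambda:\cf\delta=\mu\}$ rules out such a ``definably $\mu$-indecomposable'' ultrafilter. I would follow the classical argument by which nonreflection obstructs $\mu$-indecomposable uniform ultrafilters: writing $s^\ast=\min\{y\in S:y\ge b\}$, which is definable, $\lambda$-nonstandard, and internally of cofinality $\mu$, one analyzes the walk of $b$ along the $C$-sequence toward $s^\ast$ and, from the assumed absence of a $\mu$-nonstandard element, extracts a configuration incompatible with $S\cap s^\ast$ being nonstationary below $s^\ast$, contrary to the choice of the clubs $C_\delta$ disjoint from $S$. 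The delicate point is that here only the $\lambda$ definable functions are available, rather than all $\mu^\lambda$ functions as in the global indecomposability setting; the walk analysis must therefore be carried out entirely inside the definable closure, using that the $C$-sequence and the ladders are part of $\m A$ and so give definable (hence $D$-bounded) functions. This localization is precisely where the restriction $\kappa=\lambda$ bites, and is the most technical part of the argument. Combining with the reductions of the previous paragraph yields $\lambda \stackrel{\lambda}{\Rightarrow}\mu$, hence $\mu^\lambda$ is not $[\lambda,\lambda]$-compact.

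Finally, for the $\Box_\lambda$ clause I would invoke the classical fact that $\Box_\lambda$ implies $E_{\lambda^+}^\mu$ for every regular $\mu\le\lambda$: a nonreflecting stationary subset of $\{\delta<\lambda^+:\cf\delta=\mu\}$ can be read off a square sequence on $\lambda$. Since $\lambda^+$ is regular and $\mu\le\lambda<\lambda^+$, applying the first statement with $\lambda^+$ in the role of $\lambda$ and $\mu$ unchanged gives that $\mu^{\lambda^+}$ is not $[\lambda^+,\lambda^+]$-compact, as required. The only nonroutine ingredient here is the extraction of $E_{\lambda^+}^\mu$ from $\Box_\lambda$, which is standard; everything else is a direct appeal to the first statement.
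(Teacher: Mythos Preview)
Your plan follows the paper's proof essentially verbatim at the structural level: reduce the first assertion to $\lambda \stackrel{\lambda}{\Rightarrow}\mu$ via Theorem~\ref{produf}, then derive the second assertion from the first via the well-known implication $\Box_\lambda \Rightarrow E_{\lambda^+}^\mu$ for regular $\mu\le\lambda$ (the paper also notes the trivial case $\lambda=\omega$, where $E_{\omega_1}^\omega$ is a ZFC theorem). The only divergence is at the implication $E_\lambda^\mu \Rightarrow \lambda \stackrel{\lambda}{\Rightarrow}\mu$: the paper does not prove this but simply invokes \cite[Theorem~4.1]{bumi}, whereas you propose to reprove it via the model-theoretic reformulation of Theorem~\ref{nonst}.

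Your outline for that reproof is in the right spirit---adding $S$, a $C$-sequence disjoint from $S$, ladders on points of $S$, and Skolem functions stays within $\lambda$ symbols, and passing to $s^\ast=\min(S\setminus b)$ is the natural move---but the decisive step (``analyze the walk of $b$ along the $C$-sequence toward $s^\ast$ and derive a contradiction with nonreflection'') is left as a heuristic. As you yourself flag, this is exactly the technical heart; what is written is a pointer to an argument rather than the argument itself, and the ``walk'' language is somewhat loose (the relevant descent is from $s^\ast$ downward, and one must show the process terminates in finitely many definable steps using only the $\lambda$ available functions). None of this is a disagreement with the paper, which simply outsources the step; it just means that if you intend to make the corollary self-contained you still have to supply that combinatorial core, whereas if you are content to cite \cite[Theorem~4.1]{bumi} your proof is already complete and identical to the paper's.
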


\begin{proof}
By \cite[Theorem 4.1]{bumi}, if
$E_ \lambda ^ \mu  $, then, in the present notation, 
$\lambda \stackrel{ \lambda  }{\Rightarrow} \mu $  
(this was denoted by $\lambda \stackrel{}{\Rightarrow} \mu $
in \cite{bumi}, a notation not consistent with the present one).
The first statement is immediate from 
Theorem \ref{produf}.
The second statement follows from the well known
fact that $\Box_ \lambda $ implies 
 $E _{ \lambda^+} ^ \mu  $,
for every regular $\mu< \lambda $. 
(We need not bother with the case 
$\lambda= \omega $, since 
 $E _{ \omega _1} ^ \omega   $
is a theorem in ZFC.)
 \end{proof}

Mycielski \cite{M} has also considered the property that 
$ \omega^ \kappa $ contains a closed discrete subset of cardinality $\kappa$.
Clearly, if  this is the case, then  $ \omega^ \kappa $ is not $\kappa$-finally compact,
not even $[ \kappa  , \kappa  ]$-compact,
and not $[ \kappa'  , \kappa'  ]$-compact, for every $\kappa' \leq \kappa $.
 A variation on the methods of the 
present note can be used to show that if $\lambda' \leq \kappa $, then 
$ \omega^ \kappa $ contains a closed discrete subset of cardinality $ \lambda' $
if and only if there is no $\lambda \leq \lambda ' $ 
such that  $\mathcal L _{ \omega _1, \omega } $ is
$ \kappa  $-$( \lambda , \lambda  )$-compact,
 if and only if (by Corollary \ref{umi})
there is no $\lambda \leq \lambda ' $ 
such that 
$\mathcal L _{ \lambda , \omega } $ is 
$ \kappa  $-$( \lambda , \lambda  )$-compact,
if and only if  (by Theorem \ref{log}) for
no $\lambda \leq \lambda ' $
$ \omega^ \kappa $ 
 is $[ \lambda , \lambda  ]$-compact.

Finally, let us notice that, though we have stated our results in terms of powers of 
$ \omega$, they can be reformulated in a way which involves  arbitrary
$T_1$ spaces.

\begin{proposition} \labbel{tutt}
For given $\lambda$ and $\kappa$, the following conditions are equivalent. 
\begin{enumerate}
   \item
$ \omega^ \kappa $ is not  $ [\lambda , \lambda  ]$-compact.
\item  
For every product $X= \prod _{i \in I} X_i$ 
of $T_1$ topological spaces, if $X$ is 
$ [\lambda , \lambda  ]$-compact, then
$|\{ i \in I \mid X_i \text{ is not  countably compact} \}|< \kappa $. 
 \end{enumerate} 
 \end{proposition}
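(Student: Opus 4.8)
The plan is to prove both implications by contraposition, using two facts: $[\lambda,\lambda]$-compactness passes to closed subspaces, and inside any $T_1$ factor that is not countably compact one can find a closed copy of the discrete space $\omega$.

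The direction (2)$\Rightarrow$(1) is immediate. If (1) fails, so that $\omega^\kappa$ \emph{is} $[\lambda,\lambda]$-compact, I take $X=\prod_{i\in\kappa}X_i$ with each $X_i$ the discrete space $\omega$. Then $X=\omega^\kappa$ is $[\lambda,\lambda]$-compact, every factor is $T_1$, and none is countably compact, so $|\{i:X_i\text{ not countably compact}\}|=\kappa\not<\kappa$; this single example refutes (2).

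For (1)$\Rightarrow$(2) I argue by contradiction: suppose $\omega^\kappa$ is not $[\lambda,\lambda]$-compact, yet some product $X=\prod_{i\in I}X_i$ of $T_1$ spaces is $[\lambda,\lambda]$-compact while $J=\{i:X_i\text{ not countably compact}\}$ has cardinality at least $\kappa$; shrink $J$ to size exactly $\kappa$. First I invoke the standard point-set lemma that in a $T_1$ space a failure of countable compactness produces a countably infinite subset with no $\omega$-accumulation point, which is then automatically closed and discrete; so for each $j\in J$ I fix a countably infinite closed discrete $A_j\subseteq X_j$. Choosing points $c_i\in X_i$ for $i\in I\setminus J$ and setting $B_j=A_j$ and $B_i=\{c_i\}$ otherwise, I let $Z=\prod_{i\in I}B_i$. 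Each $A_j$ is closed and, since $X_i$ is $T_1$, each $\{c_i\}$ is closed, so $Z$ is closed in $X$; and because the subspace topology on a product of subspaces is the product of the subspace topologies while each $A_j$ is discrete, $Z$ is homeomorphic to $\omega^\kappa$. Since a closed subspace of a $[\lambda,\lambda]$-compact space is again $[\lambda,\lambda]$-compact (cover $Z$ by $\leq\lambda$ relatively open sets, extend each to an open set of $X$, adjoin the open set $X\setminus Z$, and extract a subcover of size $<\lambda$), the copy $Z\cong\omega^\kappa$ would be $[\lambda,\lambda]$-compact, contradicting our assumption.

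I expect the crux to be the correct deployment of the $T_1$ hypothesis, which is used twice and in an essential way: it is what upgrades ``not countably compact'' to ``contains a countably infinite closed discrete subspace,'' and it is what keeps the padding singletons $\{c_i\}$ closed, so that the copy of $\omega^\kappa$ genuinely sits as a \emph{closed} subspace of $X$. Closedness is indispensable here, since without it the inheritance of $[\lambda,\lambda]$-compactness would break down and the whole argument would collapse.
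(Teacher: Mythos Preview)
Your proof is correct and follows essentially the same approach as the paper: both directions hinge on the fact that a non--countably-compact $T_1$ space contains a closed copy of $\omega$, together with closed-heredity of $[\lambda,\lambda]$-compactness. The only minor difference is that the paper also invokes preservation under continuous surjections (presumably projecting onto $\kappa$-many bad coordinates before taking the closed subspace), whereas your padding-by-singletons trick embeds $\omega^\kappa$ directly as a closed subspace of $X$ and thereby sidesteps that extra ingredient.
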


\begin{proof}
(2) $\Rightarrow $  (1) is trivial.
For the converse, notice that if  a $T_1$ topological spaces 
is not countably compact, then it contains a countable discrete closed subset,
that is, a closed copy of $ \omega$; now, use the fact that 
$ [\lambda , \lambda  ]$-compactness is closed-hereditary and preserved 
under surjective homomorphic images.
 \end{proof}

\end{document}